\newtheorem{example}[theorem]{Example}
\newtheorem{remark}[theorem]{Remark}
\def\fin{\ifmmode{\Large$\diamond$}\else{\unskip\nobreak\hfil
\penalty50\hskip1em\null\nobreak\hfil{\Large$\diamond$}
\parfillskip=0pt\finalhyphendemerits=0\endgraf}\fi}
\def\be#1#2\ee{\begin{equation}\label{eq:#1}#2\end{equation}}
\def\req#1{{\rm(\ref{eq:#1})}}
\def\bdm  {\begin{displaymath}}
\def\edm  {\end{displaymath}}
\def\bdmal{\begin{displaymath}\begin{aligned}}
\def\edmal{\end{aligned}\end{displaymath}}
\mathchardef\PhiG="0108
\mathchardef\PsiG="0109
\mathchardef\OmegaG="010A
\newcommand{\demi}{\frac{1}{2}}
\newcommand{\Chat}{{\widehat\C}}
\renewcommand{\L}{{\mathscr L}}
\newcommand{\N}{{\mathord{\mathbb N}}}
\newcommand{\R}{{\mathord{\mathbb R}}}
\newcommand{\Z}{{\mathord{\mathbb Z}}}
\newcommand{\C}{{\mathord{\mathbb C}}}
\newcommand{\E}{{\cal E}}
\renewcommand{\O}{\OmegaG}
\newcommand{\D}{{\cal D}}
\newcommand{\norm}[1]{\|#1\|}
\newcommand{\scalp}[1]{\langle\,#1\,\rangle}
\newcommand{\Real}{{\rm Re\,}}
\newcommand{\Imag}{{\rm Im\,}}
\newcommand{\rmd}{\,\mathrm{d}}
\newcommand{\ds}{\rmd s}
\newcommand{\dx}{\rmd x}
\newcommand{\dE}{\rmd E}
\newcommand{\da}{\rmd \alpha}
\newcommand{\db}{\rmd \beta}
\newcommand{\dc}{\rmd \gamma}
\newcommand{\asymm}{\alpha^{\rm symm}}
\newcommand{\dat}{\rmd \asymm}
\newcommand{\rmi}{\mathrm{i}}
\newcommand{\eps}{\varepsilon}
\newcommand{\trace}{\operatorname{Tr}}
\def\req#1{{\rm(\ref{eq:#1})}}
\newcommand{\dupdots}{\mathinner{\mkern1mu\raise\p@
    \vbox{\kern7\p@\hbox{.}}\mkern2mu
    \raise4\p@\hbox{.}\mkern2mu\raise7\p@\hbox{.}\mkern1mu}}
\newenvironment{cmatrix}{\left[\cmatrixc}{\endmatrix\right]}
\newcommand{\rtilde}{{\widetilde{r}}}
\newcommand{\utilde}{{\widetilde{u}}}
\newcommand{\tpsi}{{\widetilde{\psi}}}
\newcommand{\di}{\partial}
\title{Polarization tensors of planar domains as functions
 of the admittivity contrast}
\author{Roland Griesmaier\thanks{Institut f\"ur Mathematik,
    Universit\"at W\"urzburg, 97074 W\"urzburg, Germany
    ({\tt roland.griesmaier@uni-wuerzburg.de})} 
    \and
    Martin Hanke\thanks{Institut f\"ur Mathematik, Johannes
    Gutenberg-Universit\"at Mainz, 55099 Mainz, Germany
    ({\tt hanke@math.uni-mainz.de})}}
\begin{document}
\sloppy
\maketitle

\begin{abstract}
(Electric) polarization tensors describe part of the
leading order term of asymptotic voltage perturbations caused by 
low volume fraction inhomogeneities of the electrical properties of a
medium. 
They depend on the geometry of the support of the inhomogeneities
and on their admittivity contrast.
Corresponding asymptotic formulas are of particular interest in the
design of reconstruction algorithms for determining the locations and
the material properties of inhomogeneities inside a body 
from measurements of current flows and associated voltage
potentials on the body's surface.
In this work we consider the two-dimensional case only and provide an
analytic representation of the polarization tensor in terms of
spectral properties of the double layer integral operator
associated with the support of simply connected conductivity inhomogeneities. 
Furthermore, we establish that an (infinitesimal) simply connected
inhomogeneity has the shape of an ellipse, if and only if the
polarization tensor is a rational function of the admittivity contrast
with at most two poles whose residues satisfy a certain algebraic
constraint. 
We also use the 
analytic representation to provide a
proof of the so-called Hashin-Shtrikman bounds for polarization tensors;
a similar approach has been taken previously by 
Golden and Papanicolaou and Kohn and Milton
in the context of anisotropic composite materials.
\end{abstract}

\begin{keywords}
Polarization tensor, Fredholm eigenvalues, shape reconstruction,
Hashin-Shtrikman bounds 
\end{keywords}

\begin{AMS}
{\sc 35R30, 65N21}
\end{AMS}

\hspace*{-0.7em}
{\footnotesize \textbf{Last modified.} \today}

\pagestyle{myheadings}
\thispagestyle{plain}
\markboth{R.~GRIESMAIER, M. HANKE}
{POLARIZATION TENSORS}

\addtocounter{footnote}{2}

\section{Introduction}
\label{Sec:Intro}
Electrical impedance tomography is an imaging modality that seeks to
recover the electrical conductivity distribution inside a body from
measurements of current flows and voltage potentials on its surface.
Motivated by numerous potential applications, over the past years a
considerable amount of work has been dedicated to use such measurements
for the reconstruction of low volume fraction conductivity inhomogeneities 
inside a known background medium.
Efficient reconstruction methods for this problem usually rely on
asymptotic representation formulas for the  voltage perturbation
caused by such anomalies (see, e.g., \cite{BHV03}, or Ammari and
Kang~\cite{AmKa07} and the references therein). 
The relevant leading order term in these asymptotic formulas
essentially consists of three parts, namely (i) the gradient of a
certain fundamental solution for the background medium,
(ii) the gradient of the background
potential, and (iii) the (electric) polarization tensor. 
While the first two components are independent of any inhomogeneities,
the polarization tensor fully describes the dependence on the geometry
of the support of the inhomogeneity and on its admittivity (i.e.,
complex conductivity) contrast (see, e.g., Cedio-Fengya, Moskow, and
Vogelius~\cite{CMV98}, or \cite{AmKa07}).

In this work we study analytic properties of the polarization tensor as
function of the admittivity contrast in a restricted setting:
We consider the two-dimensional case only, and we assume that the
conductivity distribution of the background medium as well as the
conductivity inside the inhomogeneity are constant, and that the
support of the conductivity inhomogeneity is a bounded and simply connected
Lipschitz domain. 
Under these assumptions we establish a 
representation formula
for the polarization tensor as function of the admittivity contrast in
terms of the spectral decomposition of the double layer integral
operator associated with the support of the conductivity inhomogeneity. 

This analytic representation of the polarization tensor proves to be a
useful tool to analyze certain aspects of the impedance imaging
problem with low volume fraction conductivity inhomogeneities.
For instance we utilize it in the second part of
this work to provide a short and elementary proof of the so-called
Hashin-Shtrikman bounds~\cite{HaSh63} for the trace of polarization
tensors and their inverses in terms of the volume of the support of the
associated conductivity inhomogeneity, including a discussion of the
sharpness of these inequalities,
similar to a line of arguments worked out by 
Golden and Papanicolaou~\cite{GoPa83} and Kohn and Milton~\cite{KoMi86}
in the framework of anisotropic composites
(see also Belyaev and
Kozlov~\cite{BeKo93}, Lipton~\cite{Lip93}, or Capdeboscq and
Vogelius~\cite{CapVog06}).
These bounds can, e.g., be used to obtain volume estimates for 
unkown conductivity inhomogeneities from boundary measurements.
We also elaborate on how to extract shape information about the support of an 
unknown simply connected conductivity inhomogeneity from knowledge of the
associated polarization tensor as function of the admittivity contrast.
More precisely, we show that the polarization tensor is a rational
function of the admittivity contrast with at most two poles and a certain
constraint on its residues, if and
only if the support of the conductivity inhomogeneity is an ellipse. 

We note that data for the polarization tensor corresponding to several
admittivity contrasts can, e.g., be acquired using multi-frequency
electrical impedance tomography measurements as described in 
our companion paper \cite{GrHa14b}. 
There the analytic representation formula for the polarization
tensor to be established below is 
employed to provide a comprehensive
theoretical justification of so-called multi-frequency MUSIC-type
methods for electrical impedance tomography. 

This paper is organized as follows.
In the next section we review some facts concerning the mapping
properties of layer potential operators associated with the Laplace
operator for planar Lipschitz domains.
In particular we discuss spectral properties of the double layer
integral operator. 
In Section~\ref{Sec:poltensor} we use these results to derive the
representation formula for polarization tensors as a function of the
admittivity contrast; a more explicit version of this formula,
which holds under additional smoothness assumptions, is discussed
in Section~\ref{Sec:smooth}. 
Thereafter we present the two aforementioned applications of these 
representation formulas:
in Section~\ref{Sec:Riemann} we characterize the polarization tensors
corresponding to ellipses as inhomogeneities, and
in Section~\ref{Sec:inequalities} we present 
the proof of the Hashin-Shtrikman bounds. 
We conclude with some final remarks.

\section{Layer potential operators and their spectra}
\label{Sec:Basics}
Let $\O\subset\R^2$ be a bounded Lipschitz domain, and let $\nu$ be
the unit outward normal of $\O$, which is well-defined a.e.\ on
$\partial\O$. 
In the sequel we review some 
classical potential theory
for such domains in as much as it is needed here; as a general
reference we refer to the monograph by McLean~\cite{McLe00}.

Denoting the fundamental solution of the two-dimensional Laplacian by
\bdm
   \Phi(x,y) \,=\, -\frac{1}{2\pi}\,\log|x-y| \,, \qquad 
   x,y \in \R^2\,,\; x\not=y \,,
\edm
the single and the double layer operators associated with $\partial\O$ are
defined by
\bdm
   (S\chi)(x) \,=\, \int_{\partial\O} \Phi(x,y)\chi(y)\ds_y\,, \qquad
   x\in\partial\O\,,
\edm
and
\be{K}
   (K\varphi)(x) 
   \,=\, \int_{\partial\O} \partial_{\nu(y)}\Phi(x,y)\varphi(y)\ds_y\,, \qquad
   x\in\partial\O\,,
\ee
respectively, where for a.e.\ $x\in\partial\O$ the latter is to be
understood in the sense of a Cauchy principal value integral, 
when the boundary of $\O$ lacks smoothness. 
While the single layer operator belongs to
$\L\bigl(H^{-1/2}(\partial\O),H^{1/2}(\partial\O)\bigr)$\footnote{Throughout
  $\L(X,Y)$ denotes the space of bounded linear operators between
  normed spaces $X$ and $Y$, and $\L(X) := \L(X,X)$.},
the double layer operator is a continuous operator in
$\L\bigl(H^{1/2}(\partial \O)\bigr)$.

For $\varphi\in H^{1/2}(\partial\O)$ we further introduce the double layer 
potential
\be{u}
   u(x) \,=\, \int_{\partial\O} \partial_{\nu(y)}\Phi(x,y)\varphi(y)\ds_y\,, 
   \qquad x\in\O\,,
\ee
and the single layer potential 
\be{v}
   v(x) \,=\, \int_{\partial\O} \Phi(x,y) \varphi'(y)\ds_y\,, \qquad
   x \in \O\,,
\ee
with density $\varphi'=\partial_s\varphi$.
Here, 
\bdm
   \partial_s \,:\,
   H^{1/2}(\partial\O)\to H^{-1/2}(\partial\O)
\edm
denotes the derivative operator with respect to arc length 
along $\partial\O$ whose dual operator $\partial_s'$
with respect to the usual sesqui-linear duality pairing $\scalp{\cdot,\cdot}$ 
of $H^{\pm 1/2}(\partial\O)$ satisfies $\partial_s'=-\partial_s$.
The jump relations of single and double layer potentials imply that
the trace of $u$ and the normal derivative of $v$ on $\partial\O$
satisfy 
\bdm
   u|_\Gamma \,=\, \bigl(K-\frac{1}{2}I\bigr)\varphi 
   \qquad \text{and} \qquad
   \partial_\nu v \,=\, \bigl(K'+\frac{1}{2}\bigr)\varphi'\,,
\edm
where $K'\in\L\bigl(H^{-1/2}(\partial\O)\bigr)$ is the dual operator of $K$.

As established by Verchota~\cite[Sect.~4]{Verc84} for Lipschitz
domains, $v$ is 
a harmonic conjugate function of $u$, and hence,
the above jump relations in combination with the Cauchy-Riemann
differential equations imply that
\bdmal
   K'\varphi' 
   &\,=\, \bigl(K'+\frac{1}{2}I\bigr)\varphi' \,-\, \frac{1}{2}\,\varphi'
    \,=\, \partial_\nu v \,-\, \frac{1}{2}\,\varphi'
    \,=\, -\partial_s u|_{\partial\O} \,-\, \frac{1}{2}\,\varphi'\\[1ex]
   &\,=\, -\partial_s\bigl(K-\frac{1}{2}I) \varphi \,-\, \frac{1}{2}\,\varphi'
    \,=\, -\partial_s K\varphi\,.
\edmal
Likewise we conclude that the hypersingular operator 
$T\in\L\bigl(H^{1/2}(\partial\O),H^{-1/2}(\partial\O)\bigr)$ 
which maps $\varphi \in H^{1/2}(\partial\O)$ onto the normal
derivative of the double layer potential
\req{u} satisfies
\bdm
   T\varphi \,=\, \partial_\nu u \,=\, \partial_s v
   \,=\, \partial_s S \varphi'\,.
\edm
From these observations we conclude that
\be{ds-durchschieben}
   K'\partial_s \,=\, -\partial_s K \qquad \text{and} \qquad 
   T \,=\, \partial_s S\partial_s\,,
\ee
which can subsequently be used to rewrite the fundamental 
Plemelj identities (\cite[p.~244]{McLe00}) in the form
\be{Plemelj}
   KS\,=\,SK'
   \qquad \text{and} \qquad
   (S\partial_s)^2 \,=\, K^2-\frac{1}{4}I\,.
\ee

Now we turn to the spectral properties of the double layer operator.
For smooth domains the spectrum of $K$ is fairly well understood since
the work of Plemelj~\cite{Plem11}.
In this case the operator $K$ is compact, so that its spectrum
consists of the origin and a point spectrum, sometimes referred to as
Fredholm eigenvalues of the domain $\O$. 
We refer to Schiffer~\cite{Schi11} or Khavinson, Putinar, and
Shapiro~\cite{KPS07} for expositions of the corresponding results. 
These have recently been extended by Helsing and Perfekt~\cite{HePe13}
to Lipschitz domains, where $K$ will no longer be compact, in
general. 
While the focus in \cite{HePe13} is primarily on the three-dimensional
case, we will develop the corresponding two-dimensional theory below.

At this point we make the additional assumption that $\O$ be simply
connected and that $\O\subset\D$, where $\D$ denotes the unit disk. 
The first of these two assumptions implies that the trivial eigenvalue
$\lambda=-1/2$ of $K$ has a one-dimensional eigenspace, which is
spanned by the constant functions on $\partial\O$. 
The second assumption, which we remove below when we formulate (and
prove) our main result (Theorem~\ref{Thm:main}), makes sure that the
single layer operator $S$ is selfadjoint and positive as operator in
$\L\bigl(L^2(\partial\O)\bigr)$ (cf., e.g., Landkof~\cite{Land72}),
and is an isomorphism between $H^{-1/2}(\partial\O)$ and 
$H^{1/2}(\partial\O)$.
As such, $S$ has a well-defined square root operator $S^{1/2}$ that
is an isomorphism in
$\L\bigl(H^{-1/2}(\partial\O),L^2(\partial\O)\bigr)$ and in 
$\L\bigl(L^2(\partial\O),H^{1/2}(\partial\O)\bigr)$.

Now we introduce
\be{A}
   A \,=\, S^{-1/2}KS^{1/2}\,,
\ee
which is a selfadjoint operator in $L^2(\partial\O)$ by virtue of
\req{Plemelj}. 
Note that \req{A} is a similarity transformation, hence $A$ and $K$
have the same eigenvalues and spectra, and associated eigenspaces have
the same dimensions, respectively. In particular, $\lambda=-1/2$ is an
eigenvalue of $A$, and its eigenspace is spanned by $S^{-1/2}1$.
On the orthogonal complement of this eigenspace the shifted operators 
$A\pm \demi I$ are contracting (cf., e.g., Steinbach and
Wendland~\cite{StWe01}). 
Hence we conclude that the spectrum $\sigma(A)$ of $A$ is contained in
\be{sigma-A}
   \sigma(A) \,\subset\, \{-1/2\} \cup [-a,a]
\ee
for some $0\leq a<1/2$. Below, we denote by $E$ the 
spectral decomposition of $A$, i.e., 
for every Borel set $\omega\subset\R$ there is an orthogonal
projection  $E(\omega) \in \L\bigl(L^2(\partial \O)\bigr)$ such that
\bdm
   A \,=\, \int_{-\infty}^\infty \lambda \dE_\lambda
\edm
(cf., e.g., Rudin~\cite[Sect.~12.17]{Rud73}). 

For smooth domains Plemelj~\cite{Plem11} has observed that in two
space dimensions the spectra of $A$ and $K$ have rich additional
structure due to complex function theory: 
Namely, the first identity in \req{ds-durchschieben} implies that if 
$\varphi\in H^{1/2}(\partial\O)$ is an eigenfunction of $K$ for the
eigenvalue $\lambda\neq-1/2$ then $\varphi'$ is an eigenfunction of
$K'$ for $-\lambda$. 
(If $\varphi$ is an eigenfunction of $K$ for $\lambda=-1/2$ 
then $\varphi$ is constant and $\varphi'$ vanishes.) 
Since the spectra of $K$ and $K'$ are the same it follows that the
eigenvalues of $K$ different from $\lambda=-1/2$ come in pairs that
are symmetric with respect to the origin. 
Concerning the operator $A$ we note that \req{ds-durchschieben} and
\req{Plemelj} imply that 
\be{durchschieben}
   AS^{1/2}\partial_s S^{1/2}
   \,=\, S^{1/2}K'\partial_s S^{1/2}
   \,=\, -S^{1/2}\partial_s KS^{1/2}
   \,=\, -S^{1/2}\partial_s S^{1/2}A \,,
\ee
and hence, if $\psi$ is an eigenfunction of $A$ with eigenvalue 
$\lambda\neq-1/2$
then $S^{1/2}\partial_s S^{1/2}\psi$ is an eigenfunction of $A$ for the
eigenvalue $-\lambda$.

The analog of Plemelj's result for Lipschitz domains---where $A$ need not
have any eigenfunctions besides $S^{-1/2}1$---is the following 
result.

\begin{proposition}
\label{Prop:Plemelj}
Assume that $\O\subset\D$ is a simply connected Lipschitz domain. 
Then for every Borel set $\omega\subset\R$ there holds
\bdm
  E(\omega)S^{1/2}\partial_s S^{1/2}
  \,=\, S^{1/2}\partial_s S^{1/2} E(-\omega)\,.
\edm
\end{proposition}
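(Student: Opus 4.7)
Write $B := S^{1/2}\partial_s S^{1/2}$. Composing the mapping properties $S^{1/2}\in\L(L^2(\partial\O),H^{1/2}(\partial\O))$, $\partial_s\in\L(H^{1/2}(\partial\O),H^{-1/2}(\partial\O))$, and $S^{1/2}\in\L(H^{-1/2}(\partial\O),L^2(\partial\O))$, it follows that $B\in\L(L^2(\partial\O))$. The identity \req{durchschieben} reads $AB=-BA$, i.e.\ $B$ intertwines the selfadjoint operators $-A$ and $A$. The plan is to lift this intertwining relation to the full Borel functional calculus of $A$ and then specialize to characteristic functions.

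First, I would show by induction on $n\ge0$ that $A^nB=B(-A)^n$, which extends linearly to $p(A)B=Bp(-A)$ for every polynomial $p$. Since $\sigma(A)\subset[-1/2,1/2]$ by \req{sigma-A}, and since $\sigma(-A)=-\sigma(A)$ lies in the same compact interval, Weierstrass approximation together with the continuity of the continuous functional calculus yields $f(A)B=Bf(-A)$ for every $f\in C([-1/2,1/2])$, where $f(-A)$ is understood as $\tilde f(A)$ with $\tilde f(\lambda):=f(-\lambda)$.

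Next, I would extend this identity from continuous to bounded Borel functions. Let $\mathcal{M}$ denote the class of bounded Borel functions $f$ on $[-1/2,1/2]$ for which $f(A)B=Bf(-A)$. By the dominated convergence theorem for spectral integrals (cf.\ \cite[Sect.~12.21]{Rud73}), if $f_n\in\mathcal{M}$ are uniformly bounded and converge pointwise to $f$, then $f_n(A)\to f(A)$ and $f_n(-A)\to f(-A)$ in the strong operator topology, whence $f\in\mathcal{M}$. Since $\mathcal{M}$ contains all continuous functions, a standard monotone class argument yields that $\mathcal{M}$ contains every bounded Borel function.

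Finally, applying this to the characteristic function $f=\chi_\omega$ of an arbitrary Borel set $\omega\subset\R$ gives $\chi_\omega(A)B=B\,\chi_\omega(-A)$. By the spectral theorem, $\chi_\omega(A)=E(\omega)$, and the function $\lambda\mapsto\chi_\omega(-\lambda)=\chi_{-\omega}(\lambda)$ applied to $A$ yields $E(-\omega)$; hence the claimed identity follows. The only step requiring care is the extension from continuous to Borel functional calculus, but this is routine once one observes that $B$ is bounded on $L^2(\partial\O)$.
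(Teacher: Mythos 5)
Your proposal is correct and follows essentially the same route as the paper: the induction giving $A^nB=B(-A)^n$ for $B=S^{1/2}\partial_s S^{1/2}$, followed by Weierstrass approximation on the compact interval containing $\sigma(A)$, and finally passage to characteristic functions. The only cosmetic difference is the last step: the paper identifies the two scalar measures $\omega\mapsto\langle E(\omega)B\psi,\tpsi\rangle$ and $\omega\mapsto\langle BE(-\omega)\psi,\tpsi\rangle$ via their moments and the Riesz representation theorem, whereas you extend the operator identity from the continuous to the Borel functional calculus by a monotone class argument --- both are standard and equivalent ways of finishing.
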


\begin{proof}
Starting from \req{durchschieben} we readily obtain by induction that
\bdm
   A^nS^{1/2}\partial_s S^{1/2}
   \,=\, S^{1/2}\partial_s S^{1/2}(-A)^n \qquad 
   \text{for every } n\geq 0 \,.
\edm
Since
\begin{equation*}
  \langle A^n(S^{1/2}\partial_s S^{1/2})\psi,\tpsi\rangle
  \,=\, \int_{-\infty}^\infty \lambda^n \rmd 
  \langle E_\lambda(S^{1/2}\partial_s S^{1/2})\psi,\tpsi\rangle
\end{equation*} 
for every $\psi,\tpsi \in L^2(\di\O)$, and similarly
\begin{equation*}
  \begin{split}
    \langle &(S^{1/2}\partial_s S^{1/2})(-A)^n\psi,\tpsi\rangle
     \,=\, \langle (-A)^n \psi,(S^{1/2}\partial_s S^{1/2})^*\tpsi\rangle\\
    &\,=\, \int_{-\infty}^\infty \!\! (-\lambda)^n \rmd \langle E_\lambda\psi, 
    (S^{1/2}\partial_s S^{1/2})^*\tpsi\rangle
     \,=\, \int_{-\infty}^\infty \!\! \lambda^n \rmd \langle E_{-\lambda}\psi,
    (S^{1/2}\partial_s S^{1/2})^*\tpsi\rangle\\
    &\,=\, \int_{-\infty}^\infty \!\!\lambda^n \rmd 
    \langle (S^{1/2}\partial_s S^{1/2}) E_{-\lambda}\psi,\tpsi\rangle \,,
  \end{split}
\end{equation*}
we find that the measures 
$\langle E(\omega)S^{1/2}\partial_s S^{1/2}\psi,\tpsi\rangle$ and 
$\langle S^{1/2}\partial_s S^{1/2}) E(-\omega)\psi,\tpsi\rangle$,
considered as linear forms on $C(\R)$, agree on the space of
polynomials. 
Since $E$ is concentrated on a compact interval,
Weierstrass' approximation theorem and Riesz'
representation theorem imply that 
\bdm
  E(\omega)S^{1/2}\partial_s S^{1/2}
  \,=\, S^{1/2}\partial_s S^{1/2} E(-\omega)
\edm
for every Borel set $\omega\subset\R$, which was to be shown.
\hfill
\end{proof}

\begin{corollary}
\label{Kor:Plemelj}
The set $\sigma(A)\setminus\{-1/2\}$ is a compact subset of $(-1/2,1/2)$
that is symmetric with respect to the origin.
\end{corollary}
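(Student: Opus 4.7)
The compactness and the containment in $(-1/2,1/2)$ are essentially immediate from \req{sigma-A}: since $\sigma(A)$ is closed and $\sigma(A)\subset\{-1/2\}\cup[-a,a]$ with $0\le a<1/2$, the set $\sigma(A)\setminus\{-1/2\}$ is a closed subset of the compact interval $[-a,a]\subset(-1/2,1/2)$, and hence itself a compact subset of $(-1/2,1/2)$.

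For the reflection symmetry about the origin the plan is to combine Proposition~\ref{Prop:Plemelj} with a kernel/range analysis of the operator $T:=S^{1/2}\partial_s S^{1/2}$ viewed as an element of $\L(L^2(\partial\O))$. First I would verify that $T$ is skew-adjoint on $L^2(\partial\O)$, which follows by chasing the duality pairings using that $S^{1/2}$ is self-adjoint in $\L(L^2(\partial\O))$ and that $\partial_s'=-\partial_s$ with respect to the sesqui-linear pairing $\scalp{\cdot,\cdot}$. Since $S^{1/2}$ is an isomorphism, $T\psi=0$ forces $S^{1/2}\psi$ to be constant on $\partial\O$, so $\ker T=\Span(S^{-1/2}1)$; but this is exactly the eigenspace of $A$ for the eigenvalue $-1/2$. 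Hence $E(\{-1/2\})$ is the orthogonal projection onto $\ker T$, and, using $T^*=-T$, the complementary projection $E(\R\setminus\{-1/2\})=I-E(\{-1/2\})$ projects onto $(\ker T)^\perp=\overline{\rge{T}}$.

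With these identifications in hand I would argue by contradiction. Suppose $\lambda_0\in\sigma(A)\setminus\{-1/2\}$ but $-\lambda_0\notin\sigma(A)$. By the first part, $\lambda_0\in(-1/2,1/2)$, and since $\sigma(A)$ is closed one can choose an open neighborhood $\omega$ of $\lambda_0$ with $-1/2\notin\omega$ and $E(-\omega)=0$. Proposition~\ref{Prop:Plemelj} then yields
\begin{equation*}
E(\omega)\,T \,=\, T\,E(-\omega) \,=\, 0\,,
\end{equation*}
so $E(\omega)$ vanishes on $\rge{T}$ and, by continuity, on $\overline{\rge{T}}$. On the other hand, $-1/2\notin\omega$ implies that $E(\omega)$ and $E(\{-1/2\})$ have mutually orthogonal ranges, so $E(\omega)$ also vanishes on $\ker T=\rge{E(\{-1/2\})}$. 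Since $L^2(\partial\O)=\ker T\oplus\overline{\rge{T}}$, this forces $E(\omega)=0$, contradicting $\lambda_0\in\sigma(A)$.

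The only genuinely technical step is the skew-adjointness of $T$, which requires some care because $\partial_s$ is naturally a bounded map $H^{1/2}(\partial\O)\to H^{-1/2}(\partial\O)$ rather than an operator on $L^2(\partial\O)$; the computation goes through by repeatedly identifying the $L^2$ inner product with the duality pairing $\scalp{\cdot,\cdot}$ on the appropriate Sobolev pair. Once this is established, the symmetry is a straightforward consequence of the intertwining relation supplied by Proposition~\ref{Prop:Plemelj}.
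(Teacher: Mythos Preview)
Your argument is correct and uses essentially the same approach as the paper: both proofs hinge on Proposition~\ref{Prop:Plemelj} together with the identification of $\ker(S^{1/2}\partial_s S^{1/2})$ with the eigenspace of $A$ at $-1/2$. The only cosmetic difference is that the paper argues directly---picking $\psi$ in the range of $E(-\omega)$ and exhibiting $S^{1/2}\partial_s S^{1/2}\psi$ as a nonzero element of $\rge{E(\omega)}$---whereas you package the same content as a contradiction via the decomposition $L^2(\partial\O)=\ker T\oplus\overline{\rge{T}}$, which costs you the (easy) skew-adjointness of $T$ but is otherwise equivalent.
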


\begin{proof}
From \req{sigma-A} follows that $\sigma(A)\setminus\{-1/2\}$ is a compact
subset of $(-1/2,1/2)$.

Now consider any $\lambda \in \sigma(A)\setminus\{-1/2\}$ and
any open neighborhood $\omega\subset\R$ of $-\lambda$. 
Without loss of generality we may assume that $1/2\notin\omega$. 
Because of $\lambda\in-\omega$ we have $E(-\omega)\neq 0$.
Hence there exists
$\psi\in L^2(\partial\O)$ with $\norm{\psi}=1$ and $E(-\omega)\psi=\psi$.
It thus follows from Proposition~\ref{Prop:Plemelj} that
\bdm
   E(\omega)S^{1/2}\partial_sS^{1/2}\psi
   \,=\, S^{1/2}\partial_s S^{1/2}E(-\omega)\psi
   \,=\, S^{1/2}\partial_s S^{1/2} \psi \,,
\edm
and the right-hand side vanishes, if and only if $\psi=cS^{-1/2}1$ for some
$c\in\R$, i.e.,
if and only if $\psi$ is an eigenfunction of $A$ corresponding to $-1/2$.
The latter, however, cannot be true because the range of $E(-\omega)$ is
orthogonal to this eigenspace due to the fact that $-1/2\notin-\omega$.
Thus we have established that $E(\omega)\neq 0$ for every sufficiently
small neighborhood of $-\lambda$. This proves that $-\lambda\in\sigma(A)$, 
which was to be shown.
\hfill
\end{proof}

\section{The polarization tensor as a function of the admittivity contrast}
\label{Sec:poltensor}
We now turn to investigate the polarization tensor of $\O$ and derive
the main result of this work. Again, for the time being we restrict our 
attention to Lipschitz domains $\O\subset\D$.

Let $x_{1,2}$ and $\nu_{1,2}$ denote the two components of the spatial
variable $x\in\R^2$ and of the outer unit normal at $x\in\partial\O$,
respectively. 
Then the quantities
\be{poltensor}
   M_{kl}(\mu;\O) \,=\, \int_{\partial \O} \nu_l (\mu I- K)^{-1} x_k\ds\,, 
   \qquad k,l=1,2\,,
\ee
are well-defined for every $\mu\in\C\setminus\sigma(K)$. They
constitute the entries of the (electric) polarization tensor 
$M(\mu; \O)=[M_{kl}(\mu;\O)]_{kl}\in\C^{2\times2}$ 
associated with the planar domain $\O$
(see \cite[Sect.~4.1]{AmKa07} for the interpretation of $\mu$ as an
admittivity contrast).

Note that 
\bdm
   \int_{\partial \O} \nu_l\ds \,=\, 0\,, \qquad l=1,2\,,
\edm
because $\nu_l = \nu\cdot\grad x_l$, and $x\mapsto x_l$ is harmonic in
$\O$. 
Furthermore, since $K1=-1/2$, any constant shift of $\O$ becomes a
constant shift of $(\mu I- K)^{-1}x_k$, that cancels in the
computation of the polarization tensor. 
It follows that the polarization tensor is independent of translations
of $\O$. 
Finally, given an orthogonal transform $Q \in \R^{2\times 2}$ and
a scaling factor $c>0$, the polarization tensor satisfies
\begin{equation}
\label{eq:similarity}
  M(\,\cdot\,;cQ(\O)) \,=\, c^2 Q M(\,\cdot\,;\O) Q^* 
\end{equation}
(cf., e.g., \cite{AmKa07}). 

We rewrite $x_1$ as a double layer potential
\bdm
   x_1 \,=\, \int_{\partial\O} \partial_{\nu(y)}\Phi(x,y)\varphi(y)\ds_y\,, 
   \qquad
   x\in\O\,,
\edm
with density $\varphi\in H^{1/2}(\partial\O)$, which solves
the second kind integral equation
\be{x1}
   \bigl(K-\frac{1}{2}I\bigr)\varphi \,=\, x_1|_{\partial\O}\,.
\ee
From \req{ds-durchschieben} we conclude that
\bdm
   \nu_1|_{\partial\O} \,=\, \partial_\nu x_1 \,=\, T\varphi 
   \,=\, \partial_s S\partial_s\varphi\,,
\edm
and hence, Plemelj's identities~\req{Plemelj} yield
\be{nu1}
   \nu_1|_{\partial\O} 
   \,=\, S^{-1}(S\partial_s)^2\varphi
   \,=\, S^{-1}\bigl(K^2-\frac{1}{4}I\bigr)\varphi
   \,=\, \bigl({K'}^2-\frac{1}{4}I\bigr)S^{-1}\varphi\,,
\ee
so that
\bdm
\begin{split}
   M_{11}(\mu;\O)
   &\,=\, \big\langle 
             S^{-1}\varphi,
             \bigl(\mu I-K\bigr)^{-1}\bigl(K^2-\frac{1}{4}I\bigr)
             \bigl(K-\frac{1}{2}I\bigr)\varphi
          \big\rangle \\[1ex]
   &\,=\, \big\langle 
             S^{-1/2}\varphi,
             \bigl(\mu I-A\bigr)^{-1}\bigl(A^2-\frac{1}{4}I\bigr)
             \bigl(A-\frac{1}{2}I\bigr)S^{-1/2}\varphi
          \big\rangle\,.
\end{split}
\edm
Using spectral calculus this can be rewritten as
\be{M11}
   M_{11}(\mu;\O)
   \,=\, \int_{-\infty}^\infty \frac{1}{\mu-\lambda} \,\da_\lambda\,,
\ee
where the positive Radon measure $\alpha$ is given by
\be{alpha}
   \alpha(\omega) \,=\, 
   \Bigl\|\,E(\omega)
        \Bigl(\bigl(\frac{1}{4}I-A^2\bigr)\bigl(\frac{1}{2}I-A\bigr)\Bigr)^{1/2}
        S^{-1/2}\varphi\,\Bigr\|^2
\ee
for every Borel set $\omega\subset\R$.
Formula~\req{M11} appears in \cite{HePe13}
(see also Ammari, Chow, Liu, and Zou~\cite{ACLZ13}), and
similar representations of effective conductivity tensors of
composite media go back to Bergman~\cite{Berg78}; see also
\cite{GoPa83,KoMi86} and the corresponding Chapter~XVIII of the
monograph by Milton~\cite{Milt02}.

The harmonic conjugate $v=x_2$ of $u=x_1$ in $\O$ is 
specified by \req{v} up to an additive constant, where
$\varphi'=\partial_s\varphi$ again. 
Accordingly,
\be{nu2}
   \nu_2|_{\partial\O} \,=\, \partial_\nu x_2
   \,=\, \bigl(K'+\frac{1}{2}I\bigr)\varphi'\,,
\ee
and hence, 
\be{M22help}
\begin{split}
   M_{22}(\mu;\O) 
    &\,=\, \big\langle 
             \bigl(K'+\frac{1}{2}I\bigr)\varphi',(\mu I-K)^{-1}S\varphi'
          \big\rangle \\[1ex]
   &\,=\, \big\langle 
             S^{1/2}\varphi',\bigl(A+\frac{1}{2}I\bigr)(\mu I-A)^{-1}
             S^{1/2}\varphi'
          \big\rangle \\[1ex]
   &\,=\, \int_{-\infty}^\infty \frac{1}{\mu-\lambda} \,\db_\lambda
\end{split}
\ee
with
\be{beta}
   \beta(\omega) \,=\, 
   \Bigl\|\,E(\omega)\bigl(\frac{1}{2}I+A\bigr)^{1/2}
            S^{1/2}\varphi'\,\Bigr\|^2
\ee
for every Borel set $\omega\subset\R$.
As has been pointed out by Keller~\cite{Kell64} the usage of these
harmonic conjugates reveals duality properties of these 
two-dimensional tensors. More precisely, we have the following result.

\begin{lemma}
\label{Lem:beta}
Let $\O\subset\D$ be a simply connected Lipschitz domain. Then
\bdm
   \beta(\omega) \,=\, \alpha(-\omega)
\edm
for every Borel set $\omega\subset\R$.
\end{lemma}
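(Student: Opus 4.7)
To prove the lemma, the strategy is to introduce the operator
$B := S^{1/2}\partial_s S^{1/2}\in\L(L^2(\partial\O))$ and to observe that
$S^{1/2}\varphi' = B\psi$ where $\psi := S^{-1/2}\varphi$, so that both $\alpha$ and
$\beta$ become spectral integrals of $\psi$ against functions of $A$---with and
without an intervening factor of $B$. Proposition~\ref{Prop:Plemelj} governs
exactly how such factors are shuffled through the spectral projection $E(\omega)$
at the cost of swapping $\omega$ with $-\omega$.

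The algebraic preparation consists of two identities. First, $B^*=-B$ as operators
on $L^2(\partial\O)$, which follows from the selfadjointness of $S$ and the
relation $\partial_s'=-\partial_s$. Second,
\[
B^2 \,=\, S^{1/2}\partial_s S\partial_s S^{1/2}
\,=\, S^{-1/2}(S\partial_s)^2 S^{1/2}
\,=\, S^{-1/2}\bigl(K^2-\tfrac{1}{4}I\bigr)S^{1/2}
\,=\, A^2-\tfrac{1}{4}I
\]
by \req{Plemelj} and the definition \req{A} of $A$; in particular
$B^*B=\tfrac{1}{4}I-A^2$. The polynomial-approximation argument inside the proof of
Proposition~\ref{Prop:Plemelj} simultaneously upgrades that proposition to the
commutation relation $h(A)B=Bh(-A)$ for every bounded Borel function $h$; in
particular $(\tfrac{1}{2}I+A)B=B(\tfrac{1}{2}I-A)$.

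With these in hand, the computation is a short exercise in functional calculus.
Starting from $\beta(\omega)=\langle B\psi,E(\omega)(\tfrac{1}{2}I+A)B\psi\rangle$
(which is just \req{beta} rewritten using the commutation of $E(\omega)$ with
$(\tfrac{1}{2}I+A)^{1/2}$), I would commute $E(\omega)$ past $B$ via
Proposition~\ref{Prop:Plemelj}, apply
$(\tfrac{1}{2}I+A)B=B(\tfrac{1}{2}I-A)$, and substitute
$B^*B=\tfrac{1}{4}I-A^2$ to arrive at
\[
\beta(\omega) \,=\, \big\langle\psi, E(-\omega)\bigl(\tfrac{1}{4}I-A^2\bigr)\bigl(\tfrac{1}{2}I-A\bigr)\psi\big\rangle\,.
\]
Since $\sigma(A)\subset\{-\tfrac{1}{2}\}\cup[-a,a]$ with $a<\tfrac{1}{2}$ by
Corollary~\ref{Kor:Plemelj}, the factors $\tfrac{1}{2}\pm\lambda$ are
nonnegative on $\sigma(A)$ and the integrand factorizes as
$(\tfrac{1}{4}-\lambda^2)(\tfrac{1}{2}-\lambda)=(\tfrac{1}{2}-\lambda)^2(\tfrac{1}{2}+\lambda)\geq0$, so the right-hand side rewrites as the squared norm
$\|E(-\omega)((\tfrac{1}{4}I-A^2)(\tfrac{1}{2}I-A))^{1/2}\psi\|^2=\alpha(-\omega)$
via \req{alpha}.

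No deep obstacle arises: the only point requiring care is keeping the various
square roots and spectral projections consistent through the reorganization, which
is harmless thanks to the spectral localization furnished by
Corollary~\ref{Kor:Plemelj}. The proof is thus essentially an application of
Proposition~\ref{Prop:Plemelj} together with the two algebraic identities
$B^*=-B$ and $B^2=A^2-\tfrac{1}{4}I$.
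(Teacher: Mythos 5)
Your proposal is correct and follows essentially the same route as the paper: the paper likewise moves the spectral projection through $S^{1/2}\partial_s S^{1/2}$ via Proposition~\ref{Prop:Plemelj}, exploits the skew-adjointness of that operator together with \req{durchschieben} and \req{Plemelj}, and lands on $-\bigl\langle E(-\omega)S^{-1/2}\varphi,(\tfrac12 I-A)(A^2-\tfrac14 I)E(-\omega)S^{-1/2}\varphi\bigr\rangle$, which is exactly your expression. Your packaging of the argument through $B$, $B^*=-B$, and $B^2=A^2-\tfrac14 I$ is only a notational streamlining (and the ``upgraded'' functional-calculus commutation you invoke is not even needed, since $(\tfrac12 I+A)B=B(\tfrac12 I-A)$ follows directly from \req{durchschieben}).
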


\begin{proof}
For a given $\omega$ we use Proposition~\ref{Prop:Plemelj} and the fact
that every function of $A$ commutes with (the projection) $E(\omega)$
to rewrite
\bdmal
   \beta(\omega) 
   &\,=\, \bigl\langle E(\omega)S^{1/2}\varphi',
                 \bigl(\frac{1}{2}I+A\bigr)E(\omega)S^{1/2}\varphi'\bigr\rangle\\
   &\,=\, \bigl\langle S^{1/2}\partial_s S^{1/2}E(-\omega)S^{-1/2}\varphi,
                 \bigl(\frac{1}{2}I+A\bigr)S^{1/2}\partial_s S^{1/2}
                 E(-\omega)S^{-1/2}\varphi \bigr\rangle \,.
\edmal
Since $S^{1/2}$ is selfadjoint and $\partial_s$ is skewadjoint
it thus follows from \req{durchschieben}, \req{ds-durchschieben},
\req{Plemelj}, and \req{A} that 
\bdmal
   \beta(\omega) 
   &\,=\, -\bigl\langle E(-\omega)S^{-1/2}\varphi,
          S^{1/2}\partial_s S^{1/2}\bigl(\frac{1}{2}I+A\bigr)
          S^{1/2}\partial_s S^{1/2}E(-\omega)S^{-1/2}\varphi \bigr\rangle\\
   &\,=\, -\bigl\langle E(-\omega)S^{-1/2}\varphi,
                 \bigl(\frac{1}{2}I-A\bigr)S^{-1/2}(S\partial_s)^2 S^{1/2} 
                 E(-\omega)S^{-1/2}\varphi \bigr\rangle\\
   &\,=\, -\bigl\langle E(-\omega)S^{-1/2}\varphi,
                 \bigl(\frac{1}{2}I-A\bigr)\bigl(A^2-\frac{1}{4} I\bigr)
                 E(-\omega)S^{-1/2}\varphi \bigr\rangle \,.
\edmal
A comparison with \req{alpha} now yields the assertion.
\hfill
\end{proof}

Making use of this result we can rewrite \req{M22help} to obtain the
formula 
\be{M22M11}
   M_{22}(\mu;\O) 
   \,=\, \int_{-\infty}^\infty \frac{1}{\mu+\lambda} \,\da_\lambda
   \,=\, -M_{11}(-\mu;\O),
\ee
see \cite{Kell64} again.
Next we consider the off-diagonal entries of the polarization tensor.
Using \req{x1} and \req{nu2} there holds
\be{M12-tmp}
\begin{aligned}
   M_{12}(\mu;\O)
   &\,=\, \big\langle
             \bigl(K'+\frac{1}{2}I\bigr)\varphi',
             \bigl(\mu I-K\bigr)^{-1}\bigl(K-\frac{1}{2}I\bigr)\varphi
          \big\rangle \\[1ex]
   &\,=\, \big\langle 
             S^{1/2}\varphi',
             \bigl(\mu I-A\bigr)^{-1}\bigl(A^2-\frac{1}{4}I\bigr)
             S^{-1/2}\varphi
          \big\rangle\,,
\end{aligned}
\ee
and hence,
\be{M12}
   M_{12}(\mu;\O)
   \,=\, \int_{-\infty}^\infty \frac{1}{\mu-\lambda}\,\dc_\lambda
\ee
with
\be{gamma}
   \gamma(\omega) \,=\, 
   \bigl\langle S^{1/2}\varphi',
   E(\omega)\bigl(A^2-\frac{1}{4}I\bigr)S^{-1/2}\varphi \bigr\rangle
\ee
for every Borel set $\omega\subset\R$.
Employing \req{nu1} it is straightforward to deduce the same 
representation for $M_{21}(\mu;\O)$. We mention, though, that
it is well-known
that the polarization tensor is symmetric (see, e.g., \cite{AmKa07}).

A similar argument as in Lemma~\ref{Lem:beta} reveals that the measure $\gamma$
is antisymmetric:

\begin{lemma}
\label{Lem:gamma}
Let $\O\subset\D$ be a simply connected Lipschitz domain. Then
\bdm
   \gamma(\omega) \,=\, -\gamma(-\omega)
\edm
for every Borel set $\omega\subset\R$.
Moreover, 
\be{estgamma}
   |\gamma(\omega)| 
   \,\leq\, \bigl(\alpha(\omega)\alpha(-\omega)\bigr)^{1/2}\!
   \,\leq\, \asymm(\omega) 
   \,:=\, \frac{1}{2}\bigl(\alpha(\omega) + \alpha(-\omega)\bigr)\,.
\ee
\end{lemma}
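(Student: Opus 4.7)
The plan is to mirror the proof of Lemma~\ref{Lem:beta} for the antisymmetry of $\gamma$, and then to bound $\gamma(\omega)$ by a suitable application of Cauchy--Schwarz after factoring $A^2-\frac{1}{4}I$ as a product of two commuting positive operators on the complement of the trivial eigenspace.

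For the antisymmetry, I would set $\psi=S^{-1/2}\varphi$ and $T=S^{1/2}\partial_s S^{1/2}$, so that $S^{1/2}\varphi'=T\psi$ and, by definition \req{gamma},
\bdm
  \gamma(\omega)\,=\,\langle T\psi,\,E(\omega)(A^2-\tfrac{1}{4}I)\psi\rangle\,.
\edm
Since $S^{1/2}$ is selfadjoint and $\partial_s$ skewadjoint we have $T^*=-T$; moving $T$ to the right hand slot produces a sign change. Applying Proposition~\ref{Prop:Plemelj}, which yields $TE(\omega)=E(-\omega)T$, together with the commutation relation $TA^2=A^2T$ obtained by iterating \req{durchschieben}, this lets me move the spectral projection from $\omega$ to $-\omega$ and to put $T$ back on $\psi$ to recover $S^{1/2}\varphi'$. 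The selfadjointness of $E(-\omega)$ and $A^2-\frac{1}{4}I$ then rearranges the resulting expression into $-\gamma(-\omega)$.

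For the inequality $|\gamma(\omega)|\leq(\alpha(\omega)\alpha(-\omega))^{1/2}$, I would use the factorization
\bdm
   A^2-\tfrac{1}{4}I \,=\, -(\tfrac{1}{2}I-A)(\tfrac{1}{2}I+A)\,,
\edm
and distribute the positive square roots asymmetrically so that
\bdm
\gamma(\omega) \,=\, -\bigl\langle E(\omega)(\tfrac{1}{2}I+A)^{1/2}S^{1/2}\varphi',\,E(\omega)(\tfrac{1}{2}I+A)^{1/2}(\tfrac{1}{2}I-A)S^{-1/2}\varphi\bigr\rangle\,.
\edm
Note that $(\tfrac{1}{2}I+A)$ is positive on the orthogonal complement of $\krn{\frac{1}{2}I+A}=\Span\{S^{-1/2}1\}$, and vanishes on that kernel, so the square roots make sense via the spectral calculus for $A$. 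Cauchy--Schwarz then bounds $|\gamma(\omega)|^2$ by the product of two norms squared. By the definition \req{beta} of $\beta$ and Lemma~\ref{Lem:beta}, the first factor equals $\beta(\omega)=\alpha(-\omega)$. For the second factor I compute the adjoint product and observe that
\bdm
\|E(\omega)(\tfrac{1}{2}I+A)^{1/2}(\tfrac{1}{2}I-A)S^{-1/2}\varphi\|^2 \,=\,\bigl\langle E(\omega)(\tfrac{1}{2}I-A)^2(\tfrac{1}{2}I+A)S^{-1/2}\varphi,\,S^{-1/2}\varphi\bigr\rangle\,,
\edm
which, after rewriting $(\tfrac{1}{2}I-A)^2(\tfrac{1}{2}I+A)=(\tfrac{1}{4}I-A^2)(\tfrac{1}{2}I-A)$, matches exactly the expression for $\alpha(\omega)$ in \req{alpha}. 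The final inequality $(\alpha(\omega)\alpha(-\omega))^{1/2}\leq\tfrac{1}{2}(\alpha(\omega)+\alpha(-\omega))$ is just the arithmetic-geometric mean inequality.

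The only nontrivial step is picking the correct asymmetric split of $(\tfrac{1}{2}I-A)(\tfrac{1}{2}I+A)$ so that Cauchy--Schwarz produces $\alpha(-\omega)$ on one side and $\alpha(\omega)$ on the other; any symmetric split (e.g.\ assigning $(\tfrac{1}{2}I-A)^{1/2}(\tfrac{1}{2}I+A)^{1/2}$ to each factor) would give a weaker bound. Once this is identified, everything reduces to routine spectral calculus for the selfadjoint operator $A$ on $L^2(\partial\O)$.
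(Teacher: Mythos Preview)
Your proposal is correct and follows essentially the same route as the paper: the antisymmetry is obtained by pushing $S^{1/2}\partial_s S^{1/2}$ through $E(\omega)$ via Proposition~\ref{Prop:Plemelj} and through $A^2$ via \req{durchschieben}, and the bound comes from the very same asymmetric factorization and Cauchy--Schwarz step, with the first factor identified as $\beta(\omega)=\alpha(-\omega)$ and the second as $\alpha(\omega)$. One cosmetic point: avoid the letter $T$ for $S^{1/2}\partial_s S^{1/2}$, since in this paper $T$ already denotes the hypersingular operator.
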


\begin{proof}
Using Proposition~\ref{Prop:Plemelj}, \req{durchschieben}, and
\req{gamma}, we find that for any Borel set $\omega\subset\R$ there
holds 
\bdmal
   \gamma(\omega) 
   &\,=\, \bigl\langle E(\omega)S^{1/2}\varphi',
                 \bigl(A^2-\frac{1}{4}I\bigr)E(\omega)S^{-1/2}\varphi \bigr\rangle\\
   &\,=\, \bigl\langle S^{1/2}\partial_s S^{1/2}E(-\omega)S^{-1/2}\varphi,
                 \bigl(A^2-\frac{1}{4}\bigr)E(\omega)S^{-1/2}\varphi \bigr\rangle\\
   &\,=\, -\bigl\langle E(-\omega)S^{-1/2}\varphi,
                 S^{1/2}\partial_s S^{1/2}\bigl(A^2-\frac{1}{4}I\bigr)
                 E(\omega)S^{-1/2}\varphi \bigr\rangle\\
   &\,=\, -\bigl\langle E(-\omega)S^{-1/2}\varphi,
                 \bigl(A^2-\frac{1}{4}I\bigr)S^{1/2}\partial_s S^{1/2} 
                 E(\omega)S^{-1/2}\varphi \bigr\rangle\\
   &\,=\, -\bigl\langle E(-\omega)S^{-1/2}\varphi,
                 \bigl(A^2-\frac{1}{4}I\bigr)E(-\omega)S^{1/2}\varphi' \bigr\rangle\\
   &\,=\, -\gamma(-\omega) \,.
\edmal
Concerning the second assertion we apply the Cauchy-Schwarz inequality to
estimate
\bdmal
    |\gamma(\omega)| 
    &\,=\, \Bigl|\bigl\langle
    E(\omega)\bigl(\frac{1}{2}I+A\bigr)^{1/2} S^{1/2}\varphi',
    E(\omega)\bigl(\frac{1}{2}I+A\bigr)^{1/2}
    \bigl(\frac{1}{2}I-A\bigr)S^{-1/2}\varphi\bigr\rangle\Bigr|\\
    &\,\leq\, \bigl(\alpha(\omega)\beta(\omega)\bigr)^{1/2} \! 
     \,\leq\, \frac12 \bigl(\alpha(\omega)+\beta(\omega)\bigr)\,.
\edmal
A final application of Lemma~\ref{Lem:beta} yields the desired 
inequality~\req{estgamma}.
\hfill
\end{proof}

We are now ready to state our main result, for which we drop the
assumption that $\O\subset\D$.

\begin{theorem}
\label{Thm:main}
Let $\O\subset\R^2$ be a bounded simply connected Lipschitz domain. 
Then the polarization tensor $M(\mu;\O)$ from \eqref{eq:poltensor}
is of the form
\bdm
   M(\mu;\O) \,=
   \begin{cmatrix}
   {\displaystyle \phantom{x}
      \int_{-\infty}^\infty 
             \frac{r^2(\lambda)}{\mu-\lambda} \,\dat_\lambda
   }  &
   {\displaystyle
      \int_{-\infty}^\infty 
             \frac{c(\lambda)r(\lambda)r(-\lambda)}{\mu-\lambda} \,
      \dat_\lambda 
      \phantom{x}
   }  \\[4ex]
   {\displaystyle \phantom{x}
      \int_{-\infty}^\infty 
             \frac{c(\lambda)r(\lambda)r(-\lambda)}{\mu-\lambda} \,
      \dat_\lambda
   }  &
   {\displaystyle
      \int_{-\infty}^\infty
             \frac{r^2(-\lambda)}{\mu-\lambda} \,\dat_\lambda  \phantom{x}
   }
   \end{cmatrix}
\edm
for $\mu\in\C\setminus\sigma(K)$, 
where $\asymm$ is a positive symmetric Radon measure supported on 
$\sigma(K)\setminus\{-1/2\}$, which is a compact subset of $(-1/2,1/2)$,
$r$ is a nonnegative $L^\infty(\R;\asymm)$ function and $c$ is an 
odd\footnote[1]{Here, odd means that $c(\lambda)=-c(-\lambda)$ 
for $\asymm$ a.e.\ $\lambda\in\R$; 
in particular, $c$ can be chosen to satisfy $c(0)=0$.}
and real valued $L^\infty(\R;\asymm)$ function with \mbox{$|c| \leq 1$.}
Moreover, there holds
\be{Mkklimit}
   \int_{-\infty}^\infty r^2(\lambda) \dat_\lambda
   \,=\, \lim_{\mu\to\infty} \mu M_{kk}(\mu;\O) 
   \,=\, |\O|
\ee
for $k=1,2$.
\end{theorem}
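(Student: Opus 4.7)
My plan has three steps: (i) unify the already-derived scalar formulas~\req{M11}, \req{M22M11}, and~\req{M12} into a single matrix expression using a common reference measure, (ii) remove the auxiliary hypothesis $\O\subset\D$ by a dilation argument, and (iii) identify the large-$\mu$ asymptotics with $|\O|$ via the divergence theorem. Step~(i) is measure-theoretic bookkeeping built on Lemmas~\ref{Lem:beta} and~\ref{Lem:gamma}; step~(ii) rests on the scale invariance of the two-dimensional double layer kernel combined with~\req{similarity}.

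For step~(i), assuming $\O\subset\D$, I would take the symmetric Radon measure $\asymm$ from~\req{estgamma} as reference; by Corollary~\ref{Kor:Plemelj} it is supported on $\sigma(A)\setminus\{-1/2\}=\sigma(K)\setminus\{-1/2\}\subset(-1/2,1/2)$. Since $\alpha\le 2\asymm$, the Radon--Nikodym theorem provides $r^2:=\mathrm d\alpha/\mathrm d\asymm\in L^\infty(\R;\asymm)$ with $0\le r^2\le 2$, turning~\req{M11} into the $(1,1)$-entry of the claimed matrix. Lemma~\ref{Lem:beta} combined with $\asymm(-\omega)=\asymm(\omega)$ gives $\mathrm d\beta/\mathrm d\asymm(\lambda)=r^2(-\lambda)$, so that~\req{M22help} yields the $(2,2)$-entry. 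The estimate $|\gamma|\le\asymm$ from~\req{estgamma} produces $q:=\mathrm d\gamma/\mathrm d\asymm\in L^\infty(\R;\asymm)$ with $|q|\le 1$, odd $\asymm$-a.e.\ by Lemma~\ref{Lem:gamma}; the sharper bound $|\gamma(\omega)|^2\le\alpha(\omega)\alpha(-\omega)$ in~\req{estgamma}, read at the density level via Lebesgue differentiation, gives $q(\lambda)^2\le r^2(\lambda)r^2(-\lambda)$ $\asymm$-a.e. Setting
\[
  c(\lambda)\,:=\,\begin{cases} q(\lambda)/\bigl(r(\lambda)r(-\lambda)\bigr), & r(\lambda)r(-\lambda)>0, \\ 0, & \text{otherwise,} \end{cases}
\]
produces an odd, real-valued $L^\infty$ function with $|c|\le 1$ and $q=c\,r(\lambda)r(-\lambda)$ $\asymm$-a.e.; this delivers both off-diagonal entries (with $M_{21}=M_{12}$ by the symmetry of $M(\mu;\O)$ already noted after~\req{M12-tmp}).

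For step~(ii), I would pick $\tau>0$ with $\tau\O\subset\D$ and apply step~(i) to $\tau\O$. A direct inspection of the kernel~\req{K} shows that the pullback $\varphi\mapsto\varphi(\tau\cdot)$ is a unitary conjugation between $K_{\tau\O}$ and $K_{\O}$ on $L^2$, so $\sigma(K_{\tau\O})=\sigma(K_{\O})$; combined with~\req{similarity} for $Q=I$, giving $M(\mu;\O)=\tau^{-2}M(\mu;\tau\O)$, setting $\asymm:=\tau^{-2}\asymm_{\tau\O}$ while keeping $r$ and $c$ unchanged delivers the representation for the original $\O$.

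For step~(iii), the Neumann series for $(\mu I-K)^{-1}$, valid when $|\mu|$ exceeds the spectral radius of $K$, gives $\mu(\mu I-K)^{-1}x_k=x_k+O(\mu^{-1})$ in $H^{1/2}(\partial\O)$ as $\mu\to\infty$; substituting into~\req{poltensor} and applying the divergence theorem yields $\lim_{\mu\to\infty}\mu M_{kk}(\mu;\O)=\int_{\partial\O}\nu_k x_k\,\mathrm ds=|\O|$. On the other hand, dominated convergence in the matrix representation (applicable since $\asymm$ has compact support in $(-1/2,1/2)$) gives the same limit as $\int r^2(\pm\lambda)\,\mathrm d\asymm_\lambda$, and these expressions coincide by the symmetry of $\asymm$. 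The main technical delicacy I anticipate is the clean simultaneous construction of $r$ and $c$ on the $\asymm$-null set where $r(\lambda)r(-\lambda)$ vanishes, but since integration against $\asymm$ is insensitive to null-set modifications this turns out to be inconsequential.
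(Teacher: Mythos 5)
Your proposal reproduces the paper's own proof essentially step for step: the same reference measure $\asymm$ from \req{estgamma}, the same Radon--Nikodym/localization construction of $r$ and $c$ from Lemmas~\ref{Lem:beta} and~\ref{Lem:gamma}, the same rescaling argument to drop $\O\subset\D$, and the same divergence-theorem computation of the limit in \req{Mkklimit}. (Your choice $c=0$ instead of the paper's $c=1$ on the set where $r(\lambda)r(-\lambda)=0$ is immaterial for the representation and arguably cleaner, since it makes $c$ genuinely odd.)

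The one point where your justification is incomplete is the support claim for $\asymm$. Corollary~\ref{Kor:Plemelj} only tells you that $\sigma(A)\setminus\{-1/2\}$ is a compact symmetric subset of $(-1/2,1/2)$; it does not by itself show that $\asymm$ assigns no mass to the eigenvalue $-1/2$, which is needed for the representation to be analytic at $\mu=-1/2$ as the theorem asserts. You must additionally verify that $\alpha(\{-1/2\})=0$. This is easy --- for instance, the factor $\frac14 I-A^2$ in \req{alpha} annihilates the $(-1/2)$-eigenspace of $A$; the paper instead observes that $\scalp{\varphi',1}=0$, so $S^{1/2}\varphi'$ is orthogonal to that eigenspace, whence $\beta(\{-1/2\})=0$ and, by Lemma~\ref{Lem:beta} and the definition of $\asymm$, also $\asymm(\{-1/2\})=0$ --- but some such argument has to be supplied. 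Everything else in your outline is sound.
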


\begin{proof}
Assume first that $\O\subset\D$, so that the previous results
apply, and define $\alpha$, $\beta$, and $\gamma$ as in \req{alpha},
\req{beta}, and \req{gamma}, respectively, where 
$\varphi\in H^{1/2}(\partial\O)$ is the unique solution of \req{x1}.
Then, by virtue of Lemma~\ref{Lem:gamma} there holds $\gamma \ll \asymm$, 
while $\alpha \ll \asymm$ is a trivial statement.
Accordingly the Radon-Nikodym Theorem (cf., e.g., Evans and
Gariepy~\cite[p.~40]{EvaGar92}) implies the existence of functions 
$h,\, \rtilde \in L^1(\R;\asymm)$ such that 
\bdm
  \gamma(\omega) \,=\, \int_\omega h(\lambda) \dat_\lambda \qquad
  \text{and} \qquad
  \alpha(\omega) \,=\, \int_\omega \rtilde(\lambda) \dat_\lambda 
\edm
for every Borel set $\omega\subset\R$.
A localization argument (cf., e.g., \cite[p.~43]{EvaGar92}) shows that
we can choose $\rtilde$ to satisfy
\bdm
   0 \,\leq\, \rtilde(\lambda) \,\leq\, 2\,, \qquad \lambda\in\R\,,
\edm 
and hence we can rewrite $\rtilde = r^2$ for some nonnegative 
$r \in L^\infty(\R;\asymm)$.
Similarly, we find from Lemma~\ref{Lem:gamma} that 
$h\in L^\infty(\R;\asymm)$ can be chosen to satisfy 
$h(\lambda)=-h(-\lambda)$ for $\asymm$ a.e.\ $\lambda\in\R$, and
\bdm
   |h(\lambda)|\,\leq\, \bigl(\rtilde(\lambda)\rtilde(-\lambda)\bigr)^{1/2}
   \,=\, r(\lambda)r(-\lambda)\,, \qquad 
   \lambda\in\R\,.
\edm
Thus the odd function
\begin{equation*}
  c(\lambda) \,:=\,
  \begin{cases}
    \dfrac{h(\lambda)}{r(\lambda)r(-\lambda)} 
       &\text{if } r(\lambda)r(-\lambda)\not=0\,,\\[1ex]
    \phantom{xxxx}1 &\text{else}\,,
  \end{cases}
\end{equation*}
is Borel measurable on $\R$ and satisfies $|c| \leq 1$.
Now the asserted representation of $M(\mu;\O)$
follows from \req{M11}, \req{M22M11}, and \req{M12}.

Concerning the support of $\asymm$ we first recall from 
Corollary~\ref{Kor:Plemelj} that 
$\sigma(K)\setminus\{-1/2\}=\sigma(A)\setminus\{-1/2\}$ 
is a compact subset of $(-1/2,1/2)$ that is symmetric with respect
to the origin. Moreover, from
\bdm
   \scalp{S^{1/2}\varphi',S^{-1/2}1} \,=\, \scalp{\varphi',1} \,=\, 0
\edm
we conclude that $S^{1/2}\varphi'$ is orthogonal to the eigenspace of $A$
corresponding to the eigenvalue $\lambda=-1/2$. 
Hence, it follows from \req{beta} that $\beta$ is supported in
$\sigma(A)\setminus\{-1/2\}$, and the same must be true for $\asymm$
due to Lemma~\ref{Lem:beta} and the definition of $\asymm$ in
\req{estgamma}. This proves the representation formula for $M(\mu;\O)$
under the additional assumption that $\O\subset\D$.

For a general domain we choose $\eps>0$ sufficiently small so that
$\eps\O\subset\D$. Making use of \req{similarity}, the above
result (with measure $\asymm)$ for $\eps\O$ provides the corresponding 
result for $\O$, with $\asymm$ replaced by $\asymm/\eps^2$. 
Note that by virtue of \req{K} the spectrum of $K$ is independent of 
rescalings of the domain.

Finally, while the first equality in \req{Mkklimit} is an immediate
consequence of the representation of the diagonal entries of the
polarization tensor, the original definition~\req{poltensor} of 
$M_{kk}(\mu;\O)$ and Green's formula yield
\bdmal
   \mu M_{kk}(\mu;\O) 
   &\,=\, \int_{\partial\O} \nu_k\bigl(I-\frac{1}{\mu} K\bigr)^{-1}\!x_k\ds
          \\[1ex]
   &\!\longrightarrow\, \int_{\partial\O} \nu_k x_k\ds
    \,=\, \int_\O |\nabla x_k|^2 \dx 
    \,=\, |\O|
\edmal
for $k=1,2$ as $\mu\to\infty$, and hence the second identity in
\req{Mkklimit} follows. 
\hfill
\end{proof}

We mention that \req{Mkklimit} is well known (cf., e.g., \cite{HePe13}).
\begin{remark}
\rm
According to Theorem~\ref{Thm:main} $M(\mu;\O)$ 
extends as an analytic function to $\C\setminus[-a,a]$ with $0\leq a<1/2$
as in \req{sigma-A}, and this extension coincides with 
the original definition~\req{poltensor} for $\mu\neq -1/2$.
The extension to $\mu=-1/2$ satisfies the alternative (dual) representation 
\bdm
   M_{kl}(\mu;\O) \,=\, 
   \int_{\partial \O} x_k (\mu I- K_\diamond')^{-1} \nu_l\ds\,, 
   \qquad k,l=1,2\,,
\edm
in which $K'_\diamond$ denotes the restriction of $K'$ to its 
invariant subspace
\bdm
   L^2_\diamond(\partial\O) \,=\,
   \{\, \chi\in L^2(\partial\O)\,:\, \int_{\partial\O}\chi\ds = 0\,\}\,,
\edm
on which $\demi I-K_\diamond'$ is invertible (cf., e.g., \cite{AmKa07}).
\fin
\end{remark}

\section{The polarization tensor of smooth planar domains}
\label{Sec:smooth}
When the double layer operator 
$K:H^{1/2}(\partial\O)\to H^{1/2}(\partial\O)$ is compact,
for example, if $\O$ is a $C^2$ domain (cf.~Kress~\cite{Kres14}),
the representation of Theorem~\ref{Thm:main} of the polarization
tensor simplifies to 
\be{MlO} 
   M(\mu;\O) \,=
   \begin{cmatrix}
   {\displaystyle \phantom{x}
      \sum_{n\in\Z} \frac{r_n^2}{\mu-\lambda_n} \quad
   }  &
   {\displaystyle
      \sum_{n\in\Z} \frac{c_nr_nr_{-n}}{\mu-\lambda_n}
      \phantom{x}
   }  \\[4ex]
   {\displaystyle \phantom{x}
      \sum_{n\in\Z} \frac{c_nr_nr_{-n}}{\mu-\lambda_n}
      \quad
   }  &
   {\displaystyle
      \sum_{n\in\Z} \frac{r_{-n}^2}{\mu-\lambda_n} 
      \phantom{x}
   }
   \end{cmatrix},
\ee
where $\{\lambda_n\,:\,n\in\N\}$ is an enumeration of the 
nonnegative eigenvalues of $K$ (ignoring multiplicities),
$\lambda_{-n}=-\lambda_n$ for $n\in\N$, and $\lambda_0=0$.
In the notation of Theorem~\ref{Thm:main} the nonnegative coefficients 
$r_{\pm n}$ and the real coefficients $c_n=-c_{-n}$ with $|c_n|\leq 1$ 
are given by
\bdm
   r_n = r(\lambda_n)\asymm(\{\lambda_n\}) \qquad \text{and} \qquad
   c_n = c(\lambda_n) 
\edm
for $n\in\Z$. 
Note that if zero does not happen to be an eigenvalue of $A$
then there holds $r_0=0$; note further that $c_0=0$, independent of
whether $\lambda=0$ is an eigenvalue of $A$, or not. 

We conclude that in the smooth case the polarization tensor 
$M(\mu;\O)$ is a meromorphic function of $\mu\in\Chat\setminus\{0\}$
into the space of complex symmetric $2\times 2$ matrices, with simple
poles at eigenvalues $\lambda\notin\{-1/2,0\}$ of the double layer
operator $K$, and with limiting value zero at infinity.
Here, as usual, $\Chat=\C\cup\{\infty\}$ denotes the extended complex
plane. 

Of course, \req{MlO} is much more easy to derive in the smooth case by
expanding
\bdm
   S^{-1/2}\varphi \,=\, \sum_{n\in\Z} r_n\psi_n\,,
\edm
where $\psi_n\in L^2(\partial\O)$, $n\in\Z$, are eigenfunctions of $A$
for the eigenvalues $\lambda_n$, respectively, normalized to satisfy 
\bdm
   \norm{\psi_n}^2 \,=\, 
   \Bigl(\bigl(\frac{1}{2}-\lambda_n\bigr)\bigl(\frac{1}{4}-\lambda_n^2\bigr)\Bigr)^{-1}, \qquad
   n\in\Z\,.
\edm
Then there holds
\bdm
   S^{1/2}\varphi' \,=\, (S^{1/2}\partial_s S^{1/2})S^{-1/2}\varphi
   \,=\, \sum_{n\in\Z} r_n\tpsi_n\,,
\edm
where $\tpsi_n = S^{1/2}\partial_s S^{1/2} \psi_n  \in L^2(\partial\O)$ 
are eigenfunctions of $A$ corresponding to $-\lambda_n=\lambda_{-n}$ 
(cf.~\req{durchschieben}), and
\bdmal
   \norm{\tpsi_n}^2 
   &\,=\, \norm{S^{1/2}\partial_s S^{1/2} \psi_n}^2
    \,=\, -\scalp{\psi_n,S^{-1/2}(S\partial_s)^2 S^{1/2}\psi_n} \\
   &\,=\, -\bigl\langle \psi_n,\bigl(A^2-\frac{1}{4}I\bigr)\psi_n\bigr\rangle
    \,=\, \bigl(\frac{1}{4}-\lambda_n^2\bigr)\norm{\psi_n}^2 
    \,=\, \bigl(\frac{1}{2}-\lambda_n\bigr)^{-1}.
\edmal

This sheds some additional light on the parameters $c_n$ in \req{MlO}:
From \req{M12-tmp} and the fact that eigenspaces of $A$
corresponding to different eigenvalues are mutually orthogonal we
deduce that 
\bdmal
   M_{12}(\mu;\O) 
   &\,=\, \sum_{n\in\Z} r_{-n}r_n \frac{1}{\mu-\lambda_n} 
                      \bigl(\lambda_n^2-\frac{1}{4}\bigr) \scalp{\tpsi_{-n},\psi_n} \\
   &\,=\, - \sum_{n\in\Z} \frac{r_nr_{-n}}{\mu-\lambda_n}
                    \frac{\scalp{\tpsi_{-n},\psi_n}}
                   {\norm{\tpsi_{-n}}\norm{\psi_n}\vphantom{\dfrac{;}{2}}}\,.
\edmal
It follows that
\be{c-n}
   c_n \,=\, - \frac{\scalp{\tpsi_{-n},\psi_n}}
                    {\norm{\tpsi_{-n}}\norm{\psi_n}\vphantom{\dfrac{;}{2}}}
\ee
is the cosine of the angle between $\psi_n$ and $-\tpsi_{-n}$, both of
which are eigenfunctions of $A$ corresponding to the same eigenvalue
$\lambda_n$. 
In particular, if all eigenspaces of $A$ have dimension one then 
$c_n=\pm 1$ for every $n\in\Z$. 
In general, however, the coefficients $c_n$ can take values different
from $\pm 1$ as the following example shows. 

\begin{example}
\label{Ex:cyclic}
\rm
A domain $\O\subset\R^2$ shall be called 
\emph{cyclic with index $k\geq 2$}, if $Q(\O)=\O$ where $Q$ denotes
the rotation by $2\pi/k$ (clockwise, or counter-clockwise). By virtue
of \req{similarity} there holds 
\bdm
   M(\mu;\O) \,=\, QM(\mu;\O)Q^*
\edm
for every cyclic domain $\O$, and hence, if $p$ is an eigenvector of 
the polarization tensor for the eigenvalue $\zeta$ then $Qp$ is another
eigenvector for the same eigenvalue. Since $p$ and $Qp$ are linearly
independent whenever $k\geq 3$, it thus follows that 
\be{M-cyclic}
   M(\mu;\O) \,=\, f(\mu) I\,,
\ee
where $f$ is a scalar complex function of $\mu$, and 
$I$ is the $2\times2$ identity matrix. 
Compare, e.g., \cite[p.~102]{AmKa07}, for the same argument. 

In particular, if $\O$ is a $C^2$ domain that is cyclic with index 
$k\geq 3$ then 
\bdm
   f(\mu) \,=\, \sum_{n\in\Z} \frac{r_n^2}{\mu-\lambda_n}
\edm
is a scalar meromorphic function of $\mu\in\Chat\setminus\{0\}$
by virtue of \req{MlO}. A further comparison with \req{MlO} yields that
in this case we must have $r_n=r_{-n}$, and hence, that $c_n=0$ for
every pole $\lambda_n$ occuring in the representation formula. 

Thus, it follows from \req{c-n} and the discussion following it that
every Fredholm eigenvalue $\lambda_n$ that is present in \req{MlO}
must have multiplicity greater than one, and that the two
eigenfunctions $\psi_n$ and $\tpsi_{-n}$ occuring in \req{c-n} must be
orthogonal to each other. 
In fact, under the given assumptions one can represent the double
layer integral operator as a block circulant integral operator, and
use this representation to convince oneself that for such domains $K$ has many
eigenvalues of higher multiplicities, and that the
corresponding eigenfunctions in \req{c-n} are, indeed, orthogonal.
\fin
\end{example}

\section{Polarization tensors with no more than two poles}
\label{Sec:Riemann}
We want to use Theorem~\ref{Thm:main} to completely characterize all
bounded and simply connected Lipschitz domains $\O\subset\R^2$ for
which the polarization tensor happens to be a rational function of the
entire complex plane with at most two poles, and with coefficients
$c_n$ in \req{MlO} restricted to have absolute values equal to one. 
In other words, we assume that
\be{MlE}
   M(\mu;\O) \,=
   \begin{cmatrix}
   {\displaystyle \phantom{x}
      \frac{r_+^2}{\mu-\lambda} 
             \,+\, \frac{r_-^2}{\mu+\lambda}\quad
   }  &
   {\displaystyle
      \frac{r_+r_-}{\mu-\lambda} 
             \,-\, \frac{r_+r_-}{\mu+\lambda}
      \phantom{x}
   }  \\[4ex]
   {\displaystyle \phantom{x}
      \frac{r_+r_-}{\mu-\lambda} 
             \,-\, \frac{r_+r_-}{\mu+\lambda}
      \quad
   }  &
   {\displaystyle
      \frac{r_-^2}{\mu-\lambda} 
             \,+\, \frac{r_+^2}{\mu+\lambda} \phantom{x}
   }
   \end{cmatrix},
\ee
where $r_\pm\geq 0$, (at most) one of which may be zero, and 
$\pm\lambda\in(-1/2,1/2)$. Note that $M(\mu;\O)$ either has two poles if 
$\lambda\neq 0$ or one pole when $\lambda=0$.

Choosing
\bdm
   c\,=\, \frac{r_+}{(r_+^2+r_-^2)^{1/2}} \qquad \text{and} \qquad
   s\,=\, \frac{r_-}{(r_+^2+r_-^2)^{1/2}}
\edm
we obtain
\bdm
   \begin{cmatrix}
     c & s
   \end{cmatrix}
   M(\mu;\O)
   \begin{cmatrix}
     c \\ s
   \end{cmatrix}
   \,=\, \frac{r_+^4 \,+\, 2r_+^2r_-^2 + r_-^4}{r_+^2+r_-^2}\,
         \frac{1}{\mu-\lambda}
   \,=\, \frac{r_+^2+r_-^2}{\mu-\lambda} ,
\edm
and---by virtue of \req{poltensor}---the left hand side equals
\bdm
   \int_{\partial\O} \partial_\nu u^- (\mu I-K)^{-1} u^-\ds 
   \qquad \text{for} \qquad
   u^-(x) \,=\, cx_1+sx_2\,.
\edm
Rewriting $u^-$ in $\O$ as a double layer potential with density
$\varphi$, we conclude as in \req{M11} that $S^{-1/2}\varphi$ is an
eigenfunction of $A$ for the eigenvalue $\lambda$, and hence,
$\varphi$ is an eigenfunction of the double layer operator $K$ for the
same eigenvalue. 

It follows from the jump relations that the double layer potential
\bdm
   \int_{\partial\O} \partial_{\nu(y)}\Phi(x,y)\varphi(y)\ds(y)
   \,=\, \begin{cases}
           u^-(x)=cx_1+sx_2\,, & x\in\O\,,\\
           u^+(x)\,, & x\in\R^2\setminus\overline\O\,,
         \end{cases}
\edm
satisfies
\be{utrace-Sec7}
   u^+|_{\partial\O} 
   \,=\, \bigl(K + \frac{1}{2}I\bigr) \varphi
   \,=\, \frac{2\lambda+1}{2}\,\varphi
   \,=\, \frac{2\lambda+1}{2\lambda-1}\, u^-|_{\partial\O}\,.
\ee
Thus, 
\begin{equation*}
  \utilde(x) \,=\,
  \begin{cases}
    u^-(x) \,, &x\in\O\,,\\
    u^+(x)+\dfrac{2}{1-2\lambda}(cx_1+sx_2) \,, &x\in\R^2\setminus\O\,,
  \end{cases}
\end{equation*}
solves the transmission problem
\begin{align*}
  \Delta\utilde &\,=\, 0 
  &&\text{in } \R^2\setminus\di\O\,,\\
  \utilde|^+_{\di\O} &\,=\, \utilde|^-_{\di\O} \,, \quad 
  \di_\nu\utilde|^+_{\di\O} \,=\, 
  \frac{3-2\lambda}{1-2\lambda} \,\di_\nu\utilde|^-_{\di\O} 
  &&\text{on } \di\O\,,\\
  \utilde(x) &- \frac{2}{1-2\lambda}(cx_1+sx_2) \longrightarrow 0 
  &&\text{as } |x|\to\infty \,.
\end{align*}
Since the gradient of $\utilde$ also happens to be constant in $\O$, it
follows from a variant of the strong Eshelby conjecture, which was
proved by Ru and Schiavone~\cite{RuSch96} (see also Kang and 
Milton~\cite{KaMi08}, and Liu~\cite{Liu08}), that $\O$ is an ellipse.

In the following we give a slightly extended version of the proof from
\cite{RuSch96} to show that this ellipse $\O$ is indeed completely
determined up to translations by knowing the full polarization
tensor for all admittivity contrasts; 
in fact, due to symmetry and \req{M22M11} it is sufficient to know
the first column of $M(\mu;\O)$.

Given $u^\pm$ and $\varphi$ as above, 
we consider the restrictions $v^\pm$ of the associated single layer
potential $v$ of \req{v} to
$\R^2\setminus\overline\O$ and $\O$, respectively, which provide
harmonic conjugates of $u^\pm$ with $v^+(x)\to 0$ for $|x|\to\infty$.
Accordingly,
\bdm
   v^-(x) \,=\, cx_2-sx_1 + d\,, \qquad x\in\O\,,
\edm
where $d\in\R$ is a constant, and hence, $v^+$ has boundary values
\be{vtrace-Sec7}
   v^+(x) \,=\, v^-(x) \,=\, cx_2-sx_1 + d\,, \qquad x\in\partial\O\,.
\ee

By the Riemann mapping theorem there is a unique
conformal transformation $\PsiG$ that takes the exterior of the unit disk 
$\D\subset\Chat$ onto $\Chat\setminus\overline\O$, and satisfies
\bdm
   \PsiG(\infty)\,=\,\infty\,, \qquad \gamma\,=\, \PsiG'(\infty)\,>\,0\,.
\edm
The parameter $\gamma$ is called the capacity of $\O$.
$\PsiG$ has a Laurent expansion of the form
\be{Psi}
   \PsiG(\zeta) \,=\, \gamma \zeta + 
                  \sum_{k=1}^\infty a_k\zeta^{-k}\,, \qquad |\zeta|\geq 1\,,
\ee
with coefficients $a_k\in\C$, $k\in\N$;
note that we can assume without loss of generality that no zero order term
occurs, since the exact position of $\O$ does not enter into the
polarization tensor.

Consider now the analytic function
\bdm
   F(z) \,=\, u^+(z)+\rmi\;\!v^+(z)\,, \qquad z\in\Chat\setminus\O\,,
\edm
which is bounded and satisfies $F(z)\to 0$ for $z\to \infty$. 
By virtue of \req{utrace-Sec7} and \req{vtrace-Sec7} its boundary
values are 
\bdm
   F(z) \,=\, -q\,\Real(e^{-\rmi\phi}z) \,+\, \rmi\,\Imag(e^{-\rmi\phi}z)
              \,+\, \rmi\;\!d
   \,=\, \frac{1-q}{2}\,e^{-\rmi\phi}z 
         \,-\, \frac{1+q}{2}\,e^{\rmi\phi}\bar{z} \,+\, \rmi\;\!d
\edm
on $\partial\O$, where
\bdm
   q\,=\, \frac{1+2\lambda}{1-2\lambda} \,>\,0 \qquad \text{and} \qquad
   e^{\rmi\phi}\,=\, c+\rmi\;\!s\,.
\edm
It follows that $G=F\circ\PsiG$ is a bounded analytic function in
$\Chat\setminus\D$ with a zero at infinity and limiting values
\bdm
   G(e^{\rmi\theta}) 
   \,=\, \frac{1-q}{2}\,e^{-\rmi\phi}\PsiG(e^{\rmi\theta})
         \,-\, \frac{1+q}{2}\, e^{\rmi\phi}\overline{\PsiG(e^{\rmi\theta})}
         \,+\, \rmi\;\!d\,,
   \qquad 0\leq\theta<2\pi\,,
\edm
on $\partial\D$.
Inserting the Laurent series~\req{Psi} of $\PsiG$ we obtain
\be{Ghelp}
\begin{aligned}
   G(e^{\rmi\theta})
   &\,=\, -\sum_{k=2}^\infty 
              \frac{1+q}{2}e^{\rmi\phi}\,\overline{a_k} \,e^{\rmi k\theta}
          \,+\, \Bigl(\frac{1-q}{2} e^{-\rmi\phi} \gamma
                      \,-\, \frac{1+q}{2}e^{\rmi\phi}\,\overline{a_1} 
                \Bigr) e^{\rmi\theta} 
          \,+\, \rmi\;\!d\\[1ex]
   &\qquad \quad
          \,+\, \Bigl(\frac{1-q}{2} e^{-\rmi\phi} a_1
                      \,-\, \frac{1+q}{2}e^{\rmi\phi} \gamma
                \Bigr) e^{-\rmi\theta}
          \,+\, \sum_{k=2}^\infty 
                   \frac{1-q}{2} e^{-\rmi\phi} a_k \,e^{-\rmi k\theta}\,.
\end{aligned}
\ee
On the other hand, since $G:\Chat\setminus\D\to\C$ is a bounded analytic
function with $G(\infty)=0$, its Laurent series has the form
\bdm
   G(\zeta) \,=\, \sum_{k=1}^\infty b_k \zeta^{-k}\,,
\edm
and inserting $\zeta=e^{\rmi\theta}$ and comparing the result with
\req{Ghelp} it follows that
\bdm
   \frac{1+q}{2}\,e^{\rmi\phi}\,\overline{a_k} \,=\, 0 \qquad
   \text{for all $k\geq 2$}\,,
\edm
and
\bdm
   \frac{1-q}{2} e^{-\rmi\phi} \gamma
   \,-\, \frac{1+q}{2}e^{\rmi\phi}\,\overline{a_1} \,=\, 0\,.
\edm
Since $q>0$ this implies that $a_k=0$ for all $k\geq 2$, and that
\bdm
   a_1 \,=\, \frac{1-q}{1+q} e^{\rmi2\phi}\gamma
   \,=\, -2\lambda e^{\rmi2\phi}\gamma\,,
\edm
i.e.,
\bdm
   \PsiG(\zeta) 
   \,=\, \gamma e^{\rmi\phi}
         \bigl( (e^{-\rmi\phi}\zeta)\,-\,2\lambda(e^{-\rmi\phi}\zeta)^{-1}
         \bigr)\,.
\edm 
This shows that $\PsiG$ is a Joukowski transformation which takes the
unit circle onto an ellipse with eccentricity $\max\{q,1/q\}$ 
(resp.\ a disk, when $\lambda=0$) centered at the origin, whose 
axes have polar angles $\phi$ and $\phi+\pi/2$.
Hence, $\O$ is the interior of this ellipse,
the volume of which is $|\O|=r_+^2+r_-^2$ by virtue of \req{Mkklimit}.

We summarize our findings in the following theorem.

\begin{theorem}
\label{Thm:ellipse}
If $M(\mu;\O)$ is given by \req{MlE} then $\O$ is an ellipse whose half axes
of lengths
\bdm
   a\,=\, \frac{(r_+^2+r_-^2)^{1/2}}{\sqrt\pi} 
          \Bigl(\frac{1-2\lambda}{1+2\lambda}\Bigr)^{1/2}
   \qquad \text{and} \qquad
   b\,=\, \frac{(r_+^2+r_-^2)^{1/2}}{\sqrt\pi} 
          \Bigl(\frac{1+2\lambda}{1-2\lambda}\Bigr)^{1/2}
\edm
make angles $\phi=\arctan(r_-/r_+)$ and $\phi+\pi/2$ to the horizontal axis,
respectively.
\end{theorem}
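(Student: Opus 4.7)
The long argument preceding the theorem has already done the bulk of the work: under the hypothesis \req{MlE} it was deduced that the Riemann map $\PsiG$ from $\Chat\setminus\overline\D$ onto $\Chat\setminus\overline\O$ is the Joukowski-type transformation
\[
   \PsiG(\zeta) \,=\, \gamma e^{\rmi\phi}
        \bigl((e^{-\rmi\phi}\zeta)-2\lambda(e^{-\rmi\phi}\zeta)^{-1}\bigr),
\]
with capacity $\gamma>0$ and $e^{\rmi\phi}=c+\rmi\;\!s$. My plan is to read off the geometric parameters of the resulting ellipse, fix $\gamma$ by the volume identity \req{Mkklimit}, and extract the orientation from $\phi$.

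First I introduce the rotated coordinate $w=e^{-\rmi\phi}\zeta$, so that on the unit circle the map $e^{-\rmi\phi}\PsiG$ reduces to the canonical form $\gamma(w-2\lambda\bar w)$. Writing $w=e^{\rmi\theta}$ I obtain real and imaginary parts $\gamma(1-2\lambda)\cos\theta$ and $\gamma(1+2\lambda)\sin\theta$, which describes an origin-centred ellipse (origin-centred because the zero-order term in the Laurent expansion \req{Psi} was normalised away) with principal semi-axes of lengths $\gamma(1-2\lambda)$ and $\gamma(1+2\lambda)$ along the real and imaginary directions of the rotated frame. Undoing the rotation, these axes make angles $\phi$ and $\phi+\pi/2$ with the horizontal in the original frame. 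Note that both $1\pm 2\lambda>0$ since $\lambda\in(-1/2,1/2)$.

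Second I determine $\gamma$ using \req{Mkklimit}. A direct reading of \req{MlE} yields $\lim_{\mu\to\infty}\mu M_{kk}(\mu;\O)=r_+^2+r_-^2$, so $|\O|=r_+^2+r_-^2$. Comparing with the elementary area formula $\pi\gamma(1-2\lambda)\cdot\gamma(1+2\lambda)=\pi\gamma^2(1-4\lambda^2)$ gives
\[
   \gamma^2 \,=\, \frac{r_+^2+r_-^2}{\pi(1-4\lambda^2)},
\]
and substituting back produces
\[
   a \,=\, \gamma(1-2\lambda)
     \,=\, \frac{(r_+^2+r_-^2)^{1/2}}{\sqrt\pi}\Bigl(\frac{1-2\lambda}{1+2\lambda}\Bigr)^{1/2},
\]
with the analogous identity for $b$. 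Finally, the angle is read off from $\cos\phi=c=r_+/(r_+^2+r_-^2)^{1/2}$ and $\sin\phi=s=r_-/(r_+^2+r_-^2)^{1/2}$, giving $\tan\phi=r_-/r_+$ and hence $\phi=\arctan(r_-/r_+)$.

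The theorem is essentially a consolidation of the preceding analysis, so there is no serious obstacle; the only thing to watch is bookkeeping. In particular, the labelling $(a,b)$ is invariant under the symmetry $(\lambda,r_+,r_-)\mapsto(-\lambda,r_-,r_+)$ of \req{MlE} (which also sends $\phi$ to $\pi/2-\phi$), so the two candidate parameterisations of the same ellipse produce consistent geometric output, as they should.
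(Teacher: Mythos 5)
Your proof is correct and takes essentially the same route as the paper: the theorem is stated there as a summary of the preceding derivation, and your reading off of the semi-axes from the Joukowski map $\PsiG(\zeta)=\gamma e^{\rmi\phi}\bigl((e^{-\rmi\phi}\zeta)-2\lambda(e^{-\rmi\phi}\zeta)^{-1}\bigr)$ combined with fixing $\gamma$ through $|\O|=r_+^2+r_-^2$ from \req{Mkklimit} is exactly the intended argument. One immaterial quibble: the map $(\lambda,r_+,r_-)\mapsto(-\lambda,r_-,r_+)$ invoked in your closing sanity check is not a symmetry of \req{MlE}, since it sends $M_{12}$ to $-M_{12}$ (it corresponds to a reflection of $\O$), but this does not affect the body of your proof.
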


\begin{remark}
\label{Rem:equivalentellipses}
\rm
In the literature (e.g., in \cite[Sect.~4.11.1]{AmKa07}) the polarization
tensor of an unknown domain $\O$ for a fixed admittivity contrast $\mu$ 
is sometimes used to determine a so-called \emph{equivalent ellipse} $\E$ 
that shares this particular polarization tensor. 
It follows from the above derivation that this ellipse, in fact,
changes with $\mu$---unless $\O$ \emph{is} an ellipse, in which case
$\O=\E$. 

For, if the ``equivalent ellipse'' would not change with $\mu$,
(say, taken from a countable set $\{\mu_k\}$ that clusters somewhere
in $\Chat\setminus(-1/2,1/2)$)
then the polarization tensors of $\E$ and $\O$ would coincide 
for these arguments and by the uniqueness theorem for analytic functions 
the polarization tensor of $\O$ would be of the same form~\req{MlE}
as the one of $\E$. 
As we have proved, this implies $\O=\E$.
\fin
\end{remark}

\begin{remark}
\label{Rem:ACLZ13}
\rm
Given the result of this section one might ask oneself whether the knowledge
of the polarization tensor $M(\mu;\O)$ as a function of the 
admittivity contrast suffices to determine the shape of a general 
simply connected and bounded Lipschitz domain $\O$.
In fact, the best possible result to expect is that this information
determines $\O$ up to reflections, because $M(\mu;-\O)=M(\mu;\O)$ by virtue
of \req{similarity}.

In this context the authors of \cite{ACLZ13} numerically optimize 
the shape of a target domain to fit 
the eigenvalues of the corresponding double layer operator $K$ to the poles 
of a given polarization tensor. Since these poles only constitute a subset
of the eigenvalues of $K$, in general (see the ellipse as an example), 
this can be a very underdetermined problem to solve.
Moreover, Schiffer~\cite{Schi57} has proved that the eigenvalues of $K$ are
invariant under M\"obius transformations of $\partial\O$; therefore 
knowledge of the poles alone does not suffice to completely determine $\O$.
\fin
\end{remark}

\section{The isoperimetric inequalities}
\label{Sec:inequalities}
In this section we revisit (in two space dimensions) the so-called 
isoperimetric inequalities, also known as Hashin-Shtrikman bounds,
\begin{subequations}
\label{eq:iso}
\begin{align}
\label{eq:iso1}
   \frac{2}{|\mu|}\,|\O|
   \,\leq\,  \bigl|\,\trace\bigl(M(\mu;\O)\bigr)\,\bigr| 
   &\,<\, \frac{8|\mu|}{4\mu^2-1}\,|\O|\,,  \\[1ex]
\label{eq:iso2}
   \bigl|\,\trace\bigl(M(\mu;\O)^{-1}\bigr)\,\bigr|
   &\,\leq\, \frac{2|\mu|}{|\O|}\,, 
\end{align}
\end{subequations}
valid for $\mu\in\R\setminus(-1/2,1/2)$.
We refer to \cite{AmKa07} and \cite{Milt02}
for the background of these inequalities,
and for variational proofs of them (see also
\cite{BeKo93,CapVog06,KoMi86,Lip93}).
In the sequel we will provide 
a proof of these inequalities on the grounds of Theorem~\ref{Thm:main},
and discuss the equality signs in \req{iso}.
As in Theorem~\ref{Thm:main}, 
the only assumption on $\O$ is to be a simply connected Lipschitz domain.

We start from the representations~\req{M11} and \req{M22M11} of the
diagonal elements of the polarization tensor, and first note that
\be{Flaeche}
   \int_{-\infty}^\infty \da_\lambda \,=\, |\O|
\ee
by virtue of \req{Mkklimit}. From \req{M11} and \req{M22M11} we further
conclude that the trace of $M(\mu;\O)$ is given by
\bdm
   \trace\bigl(M(\mu;\O)\bigr) 
   \,=\, \int_{-\infty}^\infty \frac{2\mu}{\mu^2-\lambda^2} \da_\lambda \,.
\edm
Since $\alpha\ll\asymm$ (cf.~\req{estgamma}), 
and since the support of $\asymm$ is contained in $(-1/2,1/2)$ by virtue of
Theorem~\ref{Thm:main}, it follows that
\bdm
   \frac{2}{|\mu|}\int_{-\infty}^\infty \da_\lambda
   \,\leq\, \bigl|\,\trace\bigl(M(\mu;\O)\bigr)\,\bigr| 
   \,<\, \frac{2|\mu|}{\mu^2-1/4}\int_{-\infty}^\infty \da_\lambda
\edm
for every $\mu\geq1/2$, and the same argument applies to values of $\mu$
less or equal than $-1/2$. Making use of \req{Flaeche} we thus obtain
the first isoperimetric inequality~\req{iso1}.

The above derivation reveals that the lower bound in \req{iso1}
is attained for any value of
$\mu$ with $|\mu|\geq 1/2$, if and only if $\alpha$
is supported in the origin, that is, if $M(\mu;\O)$ is 
a rational function with a simple pole at $\lambda=0$.
As we have discussed in Section~\ref{Sec:Riemann}, 
this is the case, if and only if $\O$ is a disk;
this is known as a conjecture of P\'olya and Szeg\H{o}.
In contrast, the upper bound is never attained:
that the upper bound cannot be attained by domains $\O$ with a certain
``thickness'' has been shown by Capdeboscq and Kang~\cite{CaKa06}.

Concerning the trace of $M(\mu_*;\O)^{-1}$ 
for a given $\mu_*\in\R\setminus(-1/2,1/2)$, we first rotate the coordinate
system in such a way that the orthogonal eigenbasis of 
(the real symmetric matrix) $M(\mu_*;\O)$ is parallel to the two coordinate 
axes. This means that the off-diagonal entries of the polarization tensor
(in the rotated coordinate system) have a root at $\mu=\mu_*$, and hence, 
\bdm
   \trace\bigl(M(\mu_*;\O)^{-1}\bigr)
   \,=\, \frac{1}{f_+(\mu_*)} \,+\, \frac{1}{f_-(\mu_*)}\,,
\edm
where
\bdm
   f_\pm(\mu) \,=\, \int_{-\infty}^\infty \frac{1}{\mu\pm\lambda} \da_\lambda
\edm
are the diagonal entries \req{M11} and \req{M22M11} of the 
polarization tensor in the rotated coordinates.
Straightforward differentiation reveals that $(1/f_\pm)'(\mu)>0$ for 
$\mu \geq 1/2$ and
\bdmal
   \Bigl(\frac{1}{f_\pm}\Bigr)''(\mu) 
   &\,=\, 2\,\Bigl(\int_{-\infty}^\infty 
                         \frac{1}{\mu\pm\lambda}\,\da_\lambda\Bigr)^{-3}
          \cdot\\[1ex]
   &\qquad 
          \left\{\Bigl( \int_{-\infty}^\infty 
                            \frac{1}{(\mu\pm\lambda)^2}\,\da_\lambda
                 \Bigr)^2
            \,-\,\int_{-\infty}^\infty \frac{1}{(\mu\pm\lambda)^3}\,\da_\lambda
                 \int_{-\infty}^\infty \frac{1}{\mu\pm\lambda}\,\da_\lambda
          \right\},
\edmal
where the term in curled braces on the right is nonpositive by virtue
of the Cauchy-Schwarz inequality; to be more precise, the second
derivative is strictly negative, unless the measure $\alpha$ is
concentrated on one single point in the interval $(-1/2,1/2)$.

It follows that 
\bdm
   g(\mu) \,=\, \frac{1}{f_+(\mu)} \,+\, \frac{1}{f_-(\mu)}
\edm
is concave for $\mu\geq 1/2$. Moreover, in view of \req{Flaeche} there holds
\bdm
   \frac{1}{f_{\pm}(\mu)} 
   \,=\, \frac{\mu}{|\O|}
         \,\pm\, \frac{1}{|\O|^2} \int_{-\infty}^\infty \lambda\da_\lambda
         \,+\, O(\mu^{-1}) \,, \qquad \mu\to\infty\,,
\edm
and hence, 
\bdm
   g(\mu) \,=\, \frac{2\mu}{|\O|} \,+\, O(\mu^{-1})\,, \qquad \mu\to\infty\,.
\edm
Since $g$ is concave this implies that
\be{gbound}
   g(\mu) \,\leq\, \frac{2\mu}{|\O|} \qquad 
   \text{for all $\mu\geq 1/2$}\,,
\ee
with equality for any single value of $\mu$, if and only if $\alpha$ is
concentrated on a single point $\lambda\in(-1/2,1/2)$.
If this happens to be the case then the polarization tensor has the 
representation
\bdm
   M(\mu;\O) \,=
   \begin{cmatrix}
      {\displaystyle \phantom{x} \frac{r^2}{\mu-\lambda} \quad } 
      &
      0 \phantom{x} \\[2ex]
      \phantom{x} 0 \quad
      &
      {\displaystyle \frac{r^2}{\mu+\lambda} \phantom{x} }
   \end{cmatrix}
   \qquad \text{for all $\mu\in\C$}
\edm
in the rotated coordinate system,
and hence, $\Omega$ is an ellipse (or a circle, when $\lambda=0$), as we
have seen in Section~\ref{Sec:Riemann}.
Note that \req{gbound} immediately implies that
\bdm
   |g(\mu)| \,\leq\, \frac{2|\mu|}{|\O|} \qquad
   \text{for all $\mu\in\R\setminus(-1/2,1/2)$}\,,
\edm
because $f_+(-\mu)=-f_-(\mu)$ for every $\mu\in\C$ (cf.~\req{M22M11}).

We thus have established the other isoperimetric
inequality~\req{iso2}, and have shown that equality holds for any
value of $\mu\in\R\setminus(-1/2,1/2)$,  if and only if $\O$ is an
ellipse; the latter was proved first by Kang and
Milton~\cite{KaMi06,KaMi08}.

\section{Concluding remarks}
\label{Sec:Conclusion}
We have derived an analytic representation of polarization
tensors as function of the admittivity contrast in terms of the
spectral decomposition of the double layer integral operator
associated with the underlying domain. 
Since our arguments rely on complex analysis and layer potential
techniques, a generalization of this result to three-dimensional objects
or more complicated non-constant conductivity inhomogeneities
(see, e.g., Capdeboscq and Vogelius~\cite{CapVog03a}) is not
straightforward. 

We have considered two applications of this analytic representation,
namely (i) an elementary proof of the Hashin-Shtrikman
bounds for the trace of polarization tensors and their inverses in
terms of the area of the associated domain, and (ii) we have
established a one-to-one correspondence between ellipses and certain
polarization tensors with at most two poles as a function of the
admittivity contrast.
It remains an open problem to what extent the shape of a general planar
simply connected and bounded Lipschitz domain is determined by the
polarization tensor as a function of the admittivity contrast.

Finally we note that the results of this work have further been utilized in
\cite{GrHa14b} to analyze a multi-frequency MUSIC-type method for
electrical impedance tomography.

\section*{Acknowledgements}
We like to thank Grame W. Milton (University of Utah) for pointing out to 
us relevant literature from the theory of composite materials.


\end{document}